\def\R{{\mathbb R}}
\def\N{{\mathbb N}}
\def\<{\langle}
\def\>{\rangle}
\def\P{\mathbb P}
\def\E{\mathbb E}
\def\0{\underline 0}
\def\X{\underline X}
\def\1{\underline 1}
\def\Y{\underline Y}
\def \X {{\widetilde X}}
\def \Y {{\widetilde Y}}
\def \U {{\widetilde U}}
\def \V {{\widetilde V}}
\newcommand{\bel}{\begin{equation}\label}
\newcommand{\nobel}{\begin{equation}}
\newcommand{\ee}{\end{equation}}
      \newtheorem{theorem}{Theorem}[section]
       \newtheorem{corollary}[theorem]{Corollary}
       \newtheorem{remark}{Remark}[section]
\theoremstyle{definition}
\title[Characterizations of the gamma and Kummer distributions]{Change of measure technique \\ in characterizations \\ of the Gamma and Kummer distributions}
\author{Agnieszka Piliszek,  Jacek Weso\l owski}
\address{Agnieszka Piliszek\\
Wydzia{\l} Matematyki i Nauk Informacyjnych\\
Politechnika Warszawska\\
Koszykowa 75\\
00-662  Warszawa, Poland}
\email{A.Piliszek@mini.pw.edu.pl}
\address{Jacek Weso{\l}owski\\
Wydzia{\l} Matematyki i Nauk Informacyjnych\\
Politechnika Warszawska\\
Koszykowa 75\\
00-662  Warszawa, Poland}
\email{wesolo@mini.pw.edu.pl}
\date{\today}
\begin{document}
\begin{abstract} If $X$ and $Y$ are independent random variables with distributions $\mu$ nd $\nu$ then $U=\psi(X,Y)$ and $V=\phi(X,Y)$ are also independent for some transformations $\psi$ and $\phi$. Properties of this type are known for many important probability distributions $\mu$ and $\nu$. Also related characterization questions have been widely investigated. These are questions of the form: Let $X$ and $Y$ be independent and let $U$ and $V$ be also independent. Are the distributions of $X$ and $Y$ necessarily $\mu$ and $\nu$, respectively?
	
	Recently two new properties and characterizations of this kind involving the Kummer distribution appeared in the literature.  For independent $X$ and $Y$ with gamma and Kummer distributions
	Koudou and Vallois in \cite{KV12} observed that $U=(1+(X+Y)^{-1})/(1+X^{-1})$ and $V=X+Y$ are also independent, and Hamza and Vallois in \cite{HV16} observed that $U=Y/(1+X)$ and $V=X(1+Y/(1+X))$ are independent. In \cite{KV11} and \cite{KV12} characterizations related to the first property were proved, while the characterizations in the second setting have been recently given in \cite{PW16}. These results were not fully satisfactory since in both cases technical assumptions on smoothness properties of densities of $X$ and $Y$ were needed.  In \cite{Wes15}, the assumption of independence of $U$ and $V$ in the first setting was weakened to constancy of regressions of $U$ and $U^{-1}$ given $V$ with no density assumptions. However, the additional assumption $\E\,X^{-1}<\infty$ was introduced.
	
	In the present paper we provide a complete answer to the characterization question in both settings without any additional technical assumptions regarding smoothness or existence of moments. The approach is, first, via characterizations exploiting some conditions imposed on regressions of $U$ given $V$, which are weaker than independence, but for which moment assumptions are necessary. Second, using a technique of change of measure we show that the moment assumptions can be avoided.
	
\end{abstract}

\maketitle

\section{Introduction}
In 2009 Koudou and Vallois (their paper, \cite{KV12}, was published in 2012) tried to describe a class of distributions of independent and positive random variables $X$ and $Y$ (their distributions) and functions $f$ such that $V=f(X+Y)$ and $U=f(X)-f(X+Y)$ are independent. Two special cases have already been known in the literature.
\begin{itemize}
\item Lukacs in \cite{L55}: $X$ and $Y$ are gamma distributed random variables and $f(x)=\log\,x$, $x>0$.
\item Matsumoto and Yor in \cite{MY01}: $X$ and $Y$ are generalized inverse Gaussian and gamma distributed random variables, respectively,  and $f(x)=1/x$, $x>0$.
\end{itemize}
Interestingly, due to the restriction $f>0$ imposed in \cite{KV12}, the first case was not identified there. Related characterizations were obtained  in \cite{L55} for the first property and  in \cite{LW00} for the second. Let us emphasize that neither smoothness of densities nor existence of moment assumptions were required in both these characterizations.

In \cite{KV12} the following new interesting case was discovered:
$X$ has the Kummer distribution $\mathcal{K}(a,b,c)$ with the density
$$
f_X(x)\propto \tfrac{x^{a-1}e^{-cx}}{(1+x)^{a+b}}\,I_{(0,\infty)}(x), \qquad a,b,c>0,
$$
and $Y$ has the gamma distribution $\mathcal{G}(b,c)$ with the density
$$
f_Y(y)\propto y^{b-1}e^{-cy}I_{(0,\infty)}(y)
$$
and $f(x)=\log\,\tfrac{x}{1+x}$.
Consequently, the random variables
\bel{juvi}
U=\tfrac{1+\tfrac{1}{X+Y}}{1+\tfrac{1}{X}}\qquad \mbox{and}\qquad V=X+Y
\ee
are independent. Moreover, $U$ has the beta first kind distribution $\mathrm{B}_I(a,b)$ with the density
$$
f_U(u)\propto u^{a-1}(1-u)^{b-1}I_{(0,1)}(u)
$$
and $V$ has the Kummer distribution, $\mathcal{K}(a+b,-b,c)$.

Note that the Kummer distribution $\mathcal{K}(\alpha,\beta,\gamma)$ is well-defined iff $\alpha,\gamma>0$ and $\beta\in\R$, Note that in the case $\beta>0$ it is just the natural exponential family parameterized by $\gamma$ and generated by the second kind beta distribution. It seems that the Kummer distribution appeared for the first time in 1953 in connection with Dyson's model of a disordered chain, \cite{D53}; its connection with random continued fractions was observed in \cite{MTW08}; it has been appearing in Bayesian models in queueing theory, \cite{AB97}, reliability analysis,  \cite{SBT95} or \cite{PSM96} and econometrics, \cite{AP83}; it has also been considered as a generalization of the gamma distribution, e.g. \cite{AK96}, \cite{G98} or \cite{NG07} (and applied to drought data in the latter paper); for $\alpha+\beta>0$ the distribution belongs to the generalized gamma convolutions and as such is infinitely divisible, see Th. 6.2.4 in the monograph \cite{B92}; in \cite{ASA07}, where it appears in connection with gamma finite mixtures, it is called Kobayashi's gamma-type distribution and thus related to problems of diffraction theory, \cite{K91}.

The independence property \eqref{juvi} was extended to matrix variate Kummer and Wishart distributions in \cite{Kou12}. For more information on the matrix variate Kummer distribution one can consult \cite{NC01} and  Ch. 3 of \cite{GN09} (the latter reference is for random matrices with complex entries). It is also known that in the univariate case, under appropriate smoothness assumptions on densities, a characterization counterpart of the property holds: if $X$ and $Y$ are independent positive random variables, and also $U$ and $V$, given by \eqref{juvi}, are  independent then $X\sim \mathcal{K}(a,b,c)$ and $Y\sim \mathcal{G}(b,c)$ for some positive constants $a,b,c$. Originally this result was proved in \cite{KV12} under requirements that the densities of $X$ and $Y$ are strictly positive and twice differentiable on $(0,\infty)$. Then, in \cite{KV11} it was proved under strict positivity of densities and local integrability of their logarithms. Letac in \cite{Le09} (see also Remark 2.2. in \cite{Wes15}) conjectured that such a characterization is possibly true with no assumptions on densities. In Section 3 we prove even a stronger result: instead assuming independence of $X$ and $Y$ and independence of $U$ and $V$ we impose weaker conditions which are expressed through certain relations between conditional moments of positive order of $U$ given $V$, while the assumption of independence $X$ and $Y$ is kept. Since, by its definition $U$ is bounded, no moment assumptions are necessary. This is a major difference when compared with \cite{Wes15}, where the constancy of regressions of $U$ and $U^{-1}$ given $V$ was considered. In this paper an additional requirement $\E\,X^{-1}<\infty$ was needed to assure that $\E(U^{-1}|V)$ is well defined. Our main characterization result related to \eqref{juvi} is given in Th. \ref{KV2}. Through a proper change of measure the original regression problem is reduced to the one solved in \cite{Wes15} with the required moment assumption being automatically satisfied. Finally, we utilize the results obtained under regression assumptions to obtain the  characterization by independence without any smoothness or integrability conditions.

Another independence property of the Kummer and gamma distributions has been recently discovered in \cite{HV16}: if $X\sim \mathcal{K}(a,b-a,c)$ and $Y\sim \mathcal{G}(b,c)$ then
\bel{def1}U=\tfrac{Y}{1+X}
\qquad \mbox{and}\qquad V=X\,(1+U) \ee
  are independent, $U\sim \mathcal{K}(b,a-b,c)$ and $V\sim\mathcal{G}(a,c)$. A related characterization assuming smoothness of densities (as well as its multivariate version with transformations defined in the language of directed trees) has been obtained in \cite{PW16} (some preliminary observations have been made also in \cite{Kou14}).  In Section \ref{HV} we provide regression version of the characterization, Th. \ref{tw1}, which is in parallel to the result from \cite{Wes15} we mentioned above. Th. \ref{tw2} is a counterpart of Th. \ref{KV1} recalled in the latter paragraph and uses a change of measure technique and Th. \ref{tw1}. However, in both these theorems we need to assume the existence of appropriate moments of $X$ and $Y$. The main result is Th. \ref{tw3}, in which we give characterization of Kummer and gamma distributions assuming only that $X$ and $Y$ are positive and independent and that $U$ and $V$ are independent. In the proof we again apply the technique of a change of measure. This time it allows to reduce the problem to the one considered in Th. \ref{tw2}, though no moment assumptions are required. We also note (Cor. \ref{tw_char}) that the latter result allows to improve the characterization known in the multivariate case.

\section{Characterizations related to the Hamza and Vallois property}\label{HV}
We consider a setting proposed in \cite{HV16} and developed in \cite{PW16}: $X$ and $Y$ are independent and $U$ and $V$ are defined in \eqref{def1}.
We begin with a characterization of Kummer and gamma laws by constancy of regression of $V$ and $V^{-1}$ given $U$. It is an analogue of Th. \ref{KV1} from \cite{Wes15}.
\begin{theorem}\label{tw1}
Let $X$ and $Y$ be independent positive non-degenerate random variables, such that ${\E X <\infty}$, $\E Y <\infty$ and $\E X^{-1} <\infty$. For $U$ and $V$ defined in \eqref{def1} suppose that there exist real constants $\alpha$ and $\beta$ such that
\bel{e1}\E(V|U) = \alpha \;\rm{ and }\ee
\bel{e2} \E(V^{-1}|U) = \beta. \ee

Then $\alpha\beta>1$ and there exists a constant $c>0$ such that
$$X\sim\mathcal{K}\left( \frac{\alpha\beta}{\alpha\beta -1},
c - \frac{\alpha\beta}{\alpha\beta -1},
\frac{\beta}{\alpha\beta -1}\right), \quad\mbox{and}\quad  Y\sim \mathcal{G}\left(c,\frac{\beta}{\alpha\beta - 1}\right).$$
\end{theorem}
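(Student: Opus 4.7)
The plan is to parallel the proof of the analogous regression characterization for the Koudou--Vallois property from \cite{Wes15}, now adapted to the Hamza--Vallois definitions \eqref{def1} of $U$ and $V$.

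First, I would verify $\alpha\beta>1$. Since $V>0$ a.s., the conditional Cauchy--Schwarz inequality gives
\[
1=\bigl[\E\!\bigl(V^{1/2}V^{-1/2}\mid U\bigr)\bigr]^{2}\le \E(V\mid U)\cdot\E(V^{-1}\mid U)=\alpha\beta,
\]
with equality only if $V$ is $\sigma(U)$-measurable. Since $V=X(1+U)$, equality would force $X$ to be a measurable function of $U=Y/(1+X)$, which together with the independence and non-degeneracy of $X$ and $Y$ is impossible. Hence $\alpha\beta>1$ strictly.

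Second, I would convert the regression identities \eqref{e1}, \eqref{e2} into a pair of linear relations for Laplace transforms. Taking $h(u)=e^{-tu}$ in $\E(Vh(U))=\alpha\,\E h(U)$ and $h(u)=(1+u)e^{-tu}$ in $\E(V^{-1}h(U))=\beta\,\E h(U)$, writing $V=X(1+U)$, and conditioning on $X$ (using $\E(Ye^{-sY})=-M_Y'(s)$ with $M_Y(s)=\E e^{-sY}$), one arrives at the coupled system
\[
\chi(t)-\chi'(t)=\alpha\,\psi(t),\qquad \psi(t)-\psi'(t)=\xi(t)/\beta,
\]
where
\[
\psi(t)=\E M_Y\!\bigl(\tfrac{t}{1+X}\bigr),\quad \chi(t)=\E\!\bigl[X\,M_Y\!\bigl(\tfrac{t}{1+X}\bigr)\bigr],\quad \xi(t)=\E\!\bigl[M_Y\!\bigl(\tfrac{t}{1+X}\bigr)/X\bigr].
\]
The moment hypotheses $\E X$, $\E Y$, $\E X^{-1}<\infty$ are precisely what is needed for these three functions and the relevant derivatives to be well-defined on $(0,\infty)$.

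Third, I would extract the marginal distributions from the system. The three functions $\chi,\psi,\xi$ are averages of the common kernel $M_Y(t/(1+X))$ with respect to the law of $X$ against the weights $X$, $1$, $1/X$; exploiting this algebraic rigidity jointly with the two identities above (further differentiation in $t$ to eliminate $\chi$ and $\xi$) forces $M_Y$ to satisfy the separable equation $M_Y'(s)/M_Y(s)=-b/(c+s)$ for some constants $b,c>0$, hence $Y\sim\mathcal G(b,c)$. Plugging the identified $M_Y$ back, the remaining relation becomes a Laplace-transform identity for the distribution of $X$, whose unique positive solution is the Kummer density; matching the moments $\alpha=\E V$ and $\beta=\E V^{-1}$ against the Kummer--Gamma pair then yields the parameters $a=\alpha\beta/(\alpha\beta-1)$, rate $\beta/(\alpha\beta-1)$, and second Kummer parameter $c-a$ as in the theorem.

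The main obstacle is the third step: identifying $M_Y$ as a gamma Laplace transform without any smoothness hypotheses on the distributions of $X$ or $Y$. This is possible because Laplace transforms of positive measures are real-analytic on $(0,\infty)$, so the differentiations in $t$ are fully justified; once $Y$ is fixed as gamma, the characterization of $X$ as Kummer follows from the injectivity of the Laplace transform, in parallel with the argument of \cite{Wes15}.
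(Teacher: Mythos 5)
Your reduction to the system $\chi-\chi'=\alpha\psi$, $\psi-\psi'=\xi/\beta$ is correctly derived, and the Cauchy--Schwarz argument for $\alpha\beta>1$ is sound in substance (the paper gets the same inequality more directly from $\alpha\beta=\E V\,\E V^{-1}>1$ by strict Jensen, using non-degeneracy of $V$). The genuine gap is your third step. The functions $\psi$, $\chi$, $\xi$ are integrals of the common kernel $M_Y(t/(1+X))$ against three different weights in $X$, and you have only two equations; differentiation does not close the system, because each derivative of $\chi$ or $\xi$ produces a new unknown of the form $\E\bigl[w(X)(1+X)^{-k}M_Y^{(k)}(t/(1+X))\bigr]$ that is not expressible through $\psi,\chi,\xi$ and their lower-order derivatives. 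Real-analyticity of Laplace transforms justifies differentiating but supplies no elimination, so the claim that $M_Y'/M_Y=-b/(c+s)$ is ``forced'' is unsubstantiated. The structural fact the paper exploits instead is that \emph{power} moments of $U=Y/(1+X)$ factorize by independence: $\E\,(Y/(1+X))^k=\E Y^k\cdot\E(1+X)^{-k}$. Multiplying the regression identities by $(Y/(1+X))^k$ and taking expectations (after an inductive argument showing $\E Y^k<\infty$ for every $k$) yields two three-term recurrences in the scalar sequences $g_k=\E(1+X)^{-k}$ and $h_k=\E Y^{k+1}/\E Y^k$; subtracting them eliminates the coupling and gives the linear recursion $h_k=h_{k-1}+\alpha-1/\beta$, which pins down the moment ratios of $Y$ as those of a gamma law.

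A second gap is the identification of $X$. Even once $Y$ is known to be gamma, what remains is the three-term recurrence \eqref{wazne3} for $g_k$ with $g_0=1$ and $g_1$ a priori free, so there is a one-parameter family of formal solutions; ``injectivity of the Laplace transform'' does not by itself select the Kummer one. The paper needs a separate uniqueness argument here: it forms the generating function $F(z)=\sum_{k\ge1} z^k g_k$, derives the ODE \eqref{rozn}, and analyzes the behaviour of the perturbation $H=F-F_0$ near $z=0$ and $z=1$ to conclude that any solution arising from a genuine positive random variable must have $g_1=\E(1+X_0)^{-1}$, whence all $g_k$ coincide and $X\stackrel{d}{=}X_0\sim\mathcal K(\alpha p,\,c-\alpha p,\,p)$ with $p=\beta/(\alpha\beta-1)$. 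Your sketch would need an analogous argument to rule out the spurious solutions of the recurrence.
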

\begin{proof}
Note that 
\bel{u+v}
U+V = X+Y.
\ee 
This observation and Eq. \eqref{e1} allows us to write that
\bel{pods}
 \E \left(X+1+Y|\frac{Y}{1+X}\right)= U + 1 + \E\left( V|U\right)=\frac{Y}{1+X} + 1 +\alpha.
\ee

Now, we will show that $\E Y^k<\infty$  for any $k\in\N$. We rewrite Eq. \eqref{pods} as
\bel{pods1}\E\left(X|\frac{Y}{1+X}\right)-\alpha = \frac{Y}{1+X}-\E\left(Y|\frac{Y}{1+X}\right).\ee
When we multiply both sides of \eqref{pods1} by $Y/(1+X)$ and take expectation, we get
$$\E Y^2\left(  \E (1+X)^{-2}-\E (1+X)^{-1} \right) = \E Y \left(\E X (1+X)^{-1}- \alpha \E (1+X)^{-1}\right) $$
and see that finitness of the second moment of $Y$ is guaranteed by the fact that $\E\,Y<\infty$, as assumed. If we multiply \eqref{pods1} again by the same factor, we conclude that $\E\,Y^2<\infty$ implies $\E\,Y^3<\infty$. So, we can obtain the finiteness of all the moments of $Y$ by multiplying \eqref{pods1} by $(Y/(1+X))^k$, $k=1,2,\ldots$ and taking expectations. (Of course, $\E (1+X)^{-k}<\infty$ since, under assumptions of the theorem, $1/(1+X)$ is bounded). Therefore, we can multiply Eq.\eqref{pods1} by $(Y/(1+X))^k$ and take expectation of both sides to obtain a recurrence relation
 \bel{wazne1}
 g_{k-1} - g_k = \alpha g_k + h_k g_{k+1} - g_k h_k\ee
 where $g_k = \E (1+X)^{-k}$ and $h_k = \frac{\E Y^{k+1}}{\E Y ^k}$, $k\geq 0$.

 By the definition of $V$, Eq. \eqref{e2} is equivalent to
 $$ \E\left(X^{-1}|\frac{Y}{1+X}\right) = \beta\left(1+\frac{Y}{1+X}\right).$$
 Again, since all required moments exist, we multiply the equation by $(Y/(1+X))^k$, $k\geq 0$ and take expectation to obtain
 \bel{e3} \E X^{-1}(1+X)^{-k}= \beta\E (1+X)^{-k} + \beta h_k \E (1+X)^{-(k+1)}.\ee
On the left-hand side of Eq. \eqref{e3} we use an elementary identity
\nobel \nonumber X^{-1} (1+X)^{-k} = X^{-1}-\sum_{j=1}^k (1+X)^{-j},\quad k\ge 1.\ee

Thus we have
\bel{jed} \E X^{-1} - \sum_{j=1}^k \E (1+X)^{-j} = \beta\E(1+X)^{-k} + \beta h_k \E (1+X)^{-(k+1)}\ee
and it holds for $k\geq 0$, with $\sum_{j=1}^0 =0$. Taking $k-1$ instead of $k$ in Eq. \eqref{jed} we get
\bel{dwaa} \E X^{-1} - \sum_{j=1}^{k-1} \E (1+X)^{-j} = \beta\E(1+X)^{-(k-1)} + \beta h_{k-1} \E (1+X)^{-k}.\ee
Now we subtract Eq. \eqref{jed} from Eq. \eqref{dwaa}. The factor $\E X^{-1}$ cancels out and we have
\bel{wazne2}
g_{k-1} - g_k = \frac{1}{\beta} g_k + h_k g_{k+1} - h_{k-1} g_k \quad\mathrm{for}\quad k\geq 1,
\ee
where $g_k$ and $h_k$ were defined earlier.
The left-hand sides of \eqref{wazne1} and \eqref{wazne2} are identical and so are the right-hand sides: $$\frac{1}{\beta} g_k + h_k g_{k+1} - h_{k-1} g_k =  \alpha g_k + h_k g_{k+1} - g_k h_k.$$
After simplification, and since $g_k>0$, we arrive at a linear relation of the form
$$ h_k = h_{k-1} + \alpha - \frac{1}{\beta}, \; k\geq 1.$$
Iterating the above equation one gets $h_k = \frac{k}{p} + \frac{c}{p}$, where  $p=\frac{\beta}{\alpha\beta-1}$,  $c$ is a real constant and $h_0=\E Y  =\frac{c}{p}$. It is easy to check, that $p$ is positive, since $\alpha\beta = \E V \E V^{-1} > \E V \left(\E V\right)^{-1}=1$ (here we use the fact that $X$ is not degenerate). Hence, $c>0$. Let us recall that $h_k = \frac{\E Y^{k+1}}{\E Y^k}$. We may conclude now, that $Y\sim\mathcal{G}(c,p)$.

In order to find the distribution of $X$, we insert into Eq. \eqref{wazne1} the values of $h_k$, and thus
 \bel{wazne3}
 pg_{k-1} + g_k\left(k+c -\frac{(1+\alpha)\beta}{\alpha\beta-1}\right) -g_{k+1} (k+c)= 0.\ee
 Comparing the last result with recurrence relation for function $${U(a,b,z) = \frac{1}{\Gamma (a)}\int_0^\infty e^{-zt} t ^ {a-1} (1+t)^{b-a-1}dt}$$
 in Abramowitz and Stegun (13.4.16)
 we can read at least one of the solutions of \eqref{wazne3}:
 $$\begin{array}{ccc}g_k &\propto& U\left( \alpha p, 1+\alpha p- k - c , p\right)\\
  &\propto& \int_0^\infty e^{-px} x^{\frac{1}{\alpha\beta-1}} (1+x)^{-(k+c)} dx.
\end{array}
$$
Consequently $X:=X_0$ where $X_0$ has a Kummer distribution: $\mathcal{K}\left( \alpha p, c- \alpha p,p\right)$, satisfies the assumptions of the theorem. The question is, if it is the only solution. To prove that, we define a function real $F$ by
$$F(z) := \sum_{k=1}^\infty z^k g_k =\E \left(\frac{z}{1+X-z}\right),$$
where the last equality follows from the definition of $g_k$. Note, that $F$ is well-defined at least for $z\in(-1,1)$. From the recurrence relation \eqref{wazne3} we obtain a differential equation for $F$:
\bel{rozn}
F'(z) z(1-z) = F(z) (pz^2 +dz +1 -c) + zcg_1  +pz^2,\;\;z\in(-1,1),
\ee
where $d = c-\alpha p$ and $g_1=\E (1+ X)^{-1}$.

As we have already observed, in the case $g_1=g_1^{(0)} := \E (1+X_0)^{-1}$  the function $F_0(z) = \E\left(\frac{1+X_0}{1+X_0-z}\right)$ solves Eq. \eqref{rozn}. Suppose that there exists a solution of Eq. \eqref{rozn} when $g_1\neq g_1^{(0)}$, say $F(z) = F_0(z) + H(z)$, $z\in(-1;1)$ and that this solution has the representation $F(z) = \E\left(\frac{1+X}{1+X-z}\right)$ for a positive random variable $X$. Then, the function $H$ satisfies
\bel{rozn2}H'(z)z(1-z)= H(z) (pz^2 + dz + 1-c)  +\tilde c z,\ee
where $\tilde{c} = cg_1 - cg_1^{(0)}$.
The general solution of \eqref{rozn2} is of the form
$$ H(z) = C(z) e^{-pz} |z|^{1-c} (1-z)^{1-\alpha p},$$
where $C$ is such that
\bel{nac}C'(z) = \tilde c e^{pz} |z|^{c-1}(1-z) ^{\alpha p -2}.\ee
We know that $H(1)= \E X^{-1} -\E X_0^{-1}$ and this value is well defined and finite. On the other hand
$\lim_{z\to 1} H(z)/C(z) = \infty$, as $1-\alpha p<0$. Thus, $\lim_{z\to 1} C(z) =0$.
Given Eq. \eqref{nac}, we have
\bel{calka}\int_0^1 C'(z) dz = -C(0).\ee
The integral on the left hand side is finite as $e^{pz}$ is bounded on $[0,1]$ and the rest of it is just a beta integral.

By the definition  of $H$ it follows that $H$ is analytic in its domain covering a neighbourhood, say $\mathcal{V}$, of $0$. Therefore $C(z) = |z|^{c-1} \sum_{n=0}^\infty a_n z^n$, $z\in \mathcal{V}$, where $a_n\in \R$, $n\ge 0$. Note, that
\bel{przed}\lim_{z\to 0^+} C(z) = \lim_{z\to 0 }\sum_{n=0}^\infty a_n z^{n-\delta}, \;\textrm{and} \; \delta = 1-c\in(-\infty,1).\ee
So
\bel{klamra}\lim_{z\to 0^+} C(z) =\left\{\begin{array}{ll}\displaystyle{\lim_{z\to 0} a_0} z^{-\delta}, & 0<c<1\\
0, & c\geq 1  \end{array}\right.= \left\{\begin{array}{ll}
+\infty, & a_0>0 \mathrm{\; and\;} 0<c<1,\\
-\infty, & a_0 <0\mathrm{\; and\;} 0<c<1 ,\\
0, & a_0=0 \mathrm{\; or\;} c\geq 1.
\end{array}\right.
\ee
For $c\neq 1$ due to \eqref{przed} the equalities in \eqref{klamra} are straightforward. For $c=1$  the definition of $H$ implies $H(z) = C(z) e^{-pz}(1-z)^{1-\alpha p}$ and thus, $C(0)=H(0)=F(0) -F_0(0) = 0$.

Any of the first two cases in \eqref{klamra} is impossible since the integral at the left hand side of \eqref{calka} is finite. The third equality implies that this integral in \eqref{calka} equals zero.  Since, according to \eqref{nac}, the integrand $C'(z)$ has constant sign on $(0,1)$, it follows that $\tilde{c}=0$. Thus $g_1=g_1^{(0)}$. Now, recurrence relation \eqref{wazne3} with $g_0=1$ and $g_1 = \E (1+X_0)^{-1}$ gives unique sequence $(g_k)_{k\geq 0}$ and   $g_k=U(\alpha p , 1+\alpha p - k -c, p)/U(\alpha p, 1+\alpha p - c, p)$, where $U$ is the function mentioned earlier. So we have $\E(1+X)^{-k} = \E(1+X_0)^{-k}$ for all $k\in \N$. Finally, since the support of the distribution of $1/(1+X)$ is a subset of $[0;1]$, we conclude that $X\stackrel{d}{=}X_0$.


\end{proof}

\begin{remark}
Under exactly the same  assumptions as in Th. \ref{tw1} above, the equation
$$
\beta h(s+1)(h(s-1)-\alpha h(s))-\beta h^2(s)+\E\,\tfrac{1}{X(1+X)^s}\,(h(s)-h(s+1))=0,\qquad s>1,
$$
where $h(t)=\E(1+X)^{-t}$, $t>0$, has been recently derived in  \cite{Kou14} as a first step in a search for the characterization.
\end{remark}

Two important consequences of Th. \ref{tw1} will be stated in the following theorems.

\begin{theorem}\label{tw2}
Let $X$ and $Y$ be independent positive non-degenerate random variables. For a fixed $r\in\R$ we assume that
\begin{enumerate}
	\item  ${\E X^{r+1} <\infty}$, ${\E Y^{r+1} <\infty}$ if $r\ge 1$;
	\item  ${\E X^{r+1} <\infty}$, ${\E Y^{r+1} <\infty}$  and  ${\E X^{r-1} <\infty}$, ${\E Y^{r-1} <\infty}$ if $r\in(-1,1)$;
	 	\item  ${\E X^{r-1} <\infty}$, ${\E Y^{r-1} <\infty}$ if $r\le -1$.
\end{enumerate} Define $U$ and $V$ through \eqref{def1}. If for $s\in\{r, r+1\}$
\bel{e1r}
\E(V^{s}|U) = \alpha_{s}\E (V^{s-1} | U)
\ee
for some real constants $\alpha_r$ and $\alpha_{r+1}$, then $\frac{\alpha_{r+1}}{\alpha_{r+1} - \alpha_r} >r$, $\alpha_{r+1}>\alpha_r$ and there exists a constant $c>0$ such that
$$X\sim \mathcal{K}\left( \frac{\alpha_{r+1}}{\alpha_{r+1} - \alpha_r} -r , c-\frac{\alpha_{r+1}}{\alpha_{r+1} - \alpha_r} +r,\frac{1}{\alpha_{r+1} - \alpha_r} \right),\;Y\sim \mathcal{G}\left(c,\frac{1}{\alpha_{r+1} - \alpha_r}\right).$$
\end{theorem}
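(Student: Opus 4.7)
The plan is to use a change-of-measure technique to reduce the general regression assumption \eqref{e1r} of Theorem \ref{tw2} to the basic conditions of Theorem \ref{tw1}. I will introduce a new probability $\tilde\P$ on the underlying space by
\[
\frac{d\tilde\P}{d\P} = \frac{X^r}{\E\,X^r}.
\]
The finiteness of $\E\,X^r$ follows in each of the three cases of the hypothesis by standard interpolation: in Case 1 via $X^r \le 1 + X^{r+1}$; in Case 3 via $X^r \le 1 + X^{r-1}$; and in Case 2 by Hölder between $\E\,X^{r-1}$ and $\E\,X^{r+1}$. Because the Radon--Nikodym derivative depends on $X$ alone, independence of $X$ and $Y$ is preserved under $\tilde\P$, with tilted density $\tilde f_X(x) \propto x^r f_X(x)$ and unchanged $\tilde f_Y = f_Y$.

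Using $V = X(1+U)$, Bayes' formula gives for every $s \in \R$
\[
\tilde\E(V^s\mid U) = \frac{\E(V^s X^r\mid U)}{\E(X^r\mid U)} = (1+U)^s\,\frac{\E(X^{s+r}\mid U)}{\E(X^r\mid U)} = \frac{\E(V^{s+r}\mid U)}{\E(V^r\mid U)}.
\]
Specialising to $s = 1$ and $s = -1$ and invoking the two hypotheses $\E(V^{r+1}\mid U) = \alpha_{r+1}\E(V^r\mid U)$ and $\E(V^r\mid U) = \alpha_r\E(V^{r-1}\mid U)$, one obtains
\[
\tilde\E(V\mid U) = \alpha_{r+1}, \qquad \tilde\E(V^{-1}\mid U) = 1/\alpha_r,
\]
which are exactly the regression conditions of Theorem \ref{tw1} with $\alpha = \alpha_{r+1}$ and $\beta = 1/\alpha_r$. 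The ancillary moment requirements for Theorem \ref{tw1} under $\tilde\P$---namely $\tilde\E\,X = \E\,X^{r+1}/\E\,X^r$, $\tilde\E\,X^{-1} = \E\,X^{r-1}/\E\,X^r$ and $\tilde\E\,Y = \E\,Y$---will be verified case by case.

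Once Theorem \ref{tw1} is applied under $\tilde\P$ (with $\alpha\beta = \alpha_{r+1}/\alpha_r > 1$), it yields, for some $c > 0$,
\[
\tilde X \sim \mathcal K\!\left(\tfrac{\alpha_{r+1}}{\alpha_{r+1}-\alpha_r},\, c - \tfrac{\alpha_{r+1}}{\alpha_{r+1}-\alpha_r},\, \tfrac{1}{\alpha_{r+1}-\alpha_r}\right), \qquad \tilde Y \sim \mathcal G\!\left(c,\, \tfrac{1}{\alpha_{r+1}-\alpha_r}\right),
\]
and $\alpha_{r+1} > \alpha_r$. To return to $\P$ one uses $f_X(x) \propto x^{-r}\tilde f_X(x)$, which shifts the Kummer shape parameter downward by $r$ while preserving both the rate and the sum of the two Kummer parameters; this produces the stated law for $X$, and the positivity of the new shape parameter forces $\alpha_{r+1}/(\alpha_{r+1}-\alpha_r) > r$. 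Since $\tilde\P$ leaves $Y$ untouched, its distribution coincides with that of $\tilde Y$, i.e.\ the asserted gamma law.

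The main obstacle is verifying $\tilde\E\,Y = \E\,Y < \infty$ uniformly across all three cases of the hypothesis. In Case 1, and in Case 2 when $r \ge 0$, this is immediate since $\E\,Y^{r+1} < \infty$ with $r+1 \ge 1$. The remaining subcases (Case 2 with $r < 0$, and all of Case 3) are the most delicate: only fractional or negative moments of $Y$ are assumed, so the argument must be supplemented---either by a careful interpolation between $\E\,Y^{r-1}$ and $\E\,Y^{r+1}$, or by refining the change of measure so that it simultaneously tilts the marginal of $Y$ in a way compatible with the computation of $\tilde\E(V^s\mid U)$ above.
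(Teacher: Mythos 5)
Your proposal follows exactly the route of the paper's own proof: the tilt $d\widetilde{\P}/d\P=X^r/\E X^r$ (which preserves independence and leaves the law of $Y$ unchanged), the Bayes-type identity converting \eqref{e1r} into $\widetilde{\E}(V\mid U)=\alpha_{r+1}$ and $\widetilde{\E}(V^{-1}\mid U)=1/\alpha_r$, an application of Theorem \ref{tw1} with $\alpha=\alpha_{r+1}$, $\beta=1/\alpha_r$, and the untilting of the resulting Kummer density, with $\tfrac{\alpha_{r+1}}{\alpha_{r+1}-\alpha_r}>r$ read off from integrability at the origin. So the architecture is correct and is the paper's.

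The step you flag as unresolved --- verifying $\widetilde{\E}\,Y=\E\,Y<\infty$ when $r<0$ --- is a genuine loose end (one the paper also passes over in silence), but your first proposed remedy cannot close it: interpolation between $\E\,Y^{r-1}$ and $\E\,Y^{r+1}$ controls $\E\,Y^{s}$ only for $s\in[r-1,r+1]$, and $1$ lies outside that interval precisely in the problematic subcases. Re-tilting the law of $Y$ would destroy the structure of $U=Y/(1+X)$, so the second remedy is not workable either. The finiteness must instead be extracted from the regression hypothesis itself. Test \eqref{e1r} with $s=r+1$ against the bounded function $(1+U)^{-r}e^{tU}$, $t<0$ (bounded for every sign of $r$ because of the exponential damping); since $V=X(1+U)$ this gives
\begin{equation*}
\E\bigl(X^{r+1}(1+U)\,e^{tU}\bigr)=\alpha_{r+1}\,\E\bigl(X^{r}e^{tU}\bigr),\qquad t<0 .
\end{equation*}
Letting $t\uparrow 0$ and using monotone convergence on both sides yields $\E\bigl(X^{r+1}(1+U)\bigr)=\alpha_{r+1}\E\,X^{r}<\infty$. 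Writing $X^{r+1}(1+U)=X^{r+1}+\tfrac{X^{r+1}}{1+X}\,Y$ and using the independence of $X$ and $Y$ together with $0<\E\tfrac{X^{r+1}}{1+X}<\infty$, this forces $\E\,Y<\infty$ (and, incidentally, re-derives $\E\,X^{r+1}<\infty$). The remaining moment requirements of Theorem \ref{tw1} for the tilted pair, namely $\widetilde{\E}\,\widetilde{X}=\E X^{r+1}/\E X^{r}$ and $\widetilde{\E}\,\widetilde{X}^{-1}=\E X^{r-1}/\E X^{r}$, do follow from the stated hypotheses by the elementary bounds $X^{s}\le 1+X^{r+1}$ for $0\le s\le r+1$ and $X^{s}\le 1+X^{r-1}$ for $r-1\le s\le 0$ (with Cauchy--Schwarz covering the middle case), so $\E\,Y$ was indeed the only obstruction. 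With this supplement your argument is complete.
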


\begin{proof}
We first notice that Eq. \eqref{e1r} for $s = r+1$ implies
$$\E \left( X^{r+1} (1+U) e^{tU}\right) = \alpha_{r+1} \E\left( X^r e^{tU}\right),\qquad t<0 ,$$
which can be rewritten in the integral form as
$$
\int_{(0,\infty)^2}\,x^{r+1}\left(1+\tfrac{y}{1+x}\right)e^{t\tfrac{y}{1+x}}\,\P_X(dx)\P_Y(dy)=\alpha_{r+1}\int_{(0,\infty)^2}\,x^re^{t\tfrac{y}{1+x}}\,\P_X(dx)\P_Y(dy).
$$

Denote  by $\X$ a random variable with distribution \bel{falka} d \P_{\X} (x) = \frac{x^r d\P_X (x)}{\E X^r}
\ee which is possibly defined on a different probability space than $X$ and $Y$. By $\Y$ we denote a random variable with the same distribution as $Y$ which is defined on the same probability space as $\X$ in such a way that $\X$ and $\Y$ are independent. Then we can rewrite last equality as
$$ \E (\V e^{t\U}) = \alpha_{r+1}\E(e^{t\U}),$$
where $\U = \Y/(1+\X)$ and $\V = \X(1+\U)$.
Given that, we have
\bel{do1}
\E(\V|\U) =\alpha_{r+1}.
\ee
Again, Eq. \eqref{e1r} with $s=r$ results in
$$\E(X^r e^{tU}) = \alpha_r \E (X^r X^{-1} (1+U)^{-1}e^{tU}). $$
With $\X$, $\Y$, $\U$ and $\V$ defined as before, the latter is equivalent to
\bel{do2}
\frac{1}{\alpha_r} = \E(\V^{-1}|\U).
\ee

By Eqs. \eqref{do1} and \eqref{do2}, the assumptions of Th. \ref{tw1} are satisfied for random variables $\X$ and $\Y$ with $\alpha = \alpha_{r+1}$, $\beta = \frac{1}{\alpha_r}$. Therefore,  $\X\sim \mathcal{K}\left( \frac{\alpha_{r+1}}{\alpha_{r+1} - \alpha_r} , c-\frac{\alpha_{r+1}}{\alpha_{r+1} - \alpha_r},\frac{1}{\alpha_{r+1} - \alpha_r} \right),\;Y\stackrel{d}{=}\Y\sim \mathcal{G}\left(c,\frac{1}{\alpha_{r+1} - \alpha_r}\right)$ and $\alpha\beta = \alpha_{r+1}/\alpha_r>1$. From \eqref{falka}, we conclude that random variable $X$ has density $f$ and
$$ f(x) \propto x^{\frac{\alpha_{r+1}}{\alpha_{r+1} - \alpha_r} -r}
(1+x)^{-c}e^{-\frac{1}{\alpha_{r+1} - \alpha_r}}
I_{(0,\infty)}(x)$$ The inequality $\frac{\alpha_{r+1}}{\alpha_{r+1} - \alpha_r} -r>0$ follows from integrability of $f$.
\end{proof}

\begin{remark}
	Similar extensions of the Lukacs and the Matsumoto-Yor properties were given in \cite{CH03} and \cite{CH04}, respectively. However these authors, instead of the change of measure method, which was applied above, used the standard but more cumbersome approach leading to differential equations for characteristic functions.
\end{remark}

\begin{corollary}
Let $X$ and $Y$ be independent positive non-degenerate random variables. For $U$ and $V$ defined in \eqref{def1} assume that there exist constants $a$ and $b$ such that
\bel{wn1}
\E(V|U)=a\quad\mbox{and}\quad\E(V^2|U)=b.
\ee
Then $a>0$, $b>a^2$ and there exists  $c>0$ such that
$$
X\sim \mathcal{K}\left(\tfrac{a^2}{b-a^2},c-\tfrac{a^2}{b-a^2},\,\tfrac{a}{b-a^2}\right)\quad\mbox{and}\quad Y\sim\mathcal{G}\left(c,\,\tfrac{a}{b-a^2}\right).
$$
\end{corollary}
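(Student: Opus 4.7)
The plan is to reduce the corollary to Theorem \ref{tw2} with $r=1$ by translating the two moment conditions into the format of \eqref{e1r}.

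First I would check that the hypotheses of Theorem \ref{tw2} (case $r\ge 1$) are satisfied. Since $V=X(1+U)=X+Y$ (as observed in \eqref{u+v}), the assumption $\E(V^2\mid U)=b$ gives $\E V^2=b<\infty$, and because $X,Y\ge 0$ we have $X^2,Y^2\le(X+Y)^2=V^2$, hence $\E X^2<\infty$ and $\E Y^2<\infty$. This takes care of the moment requirements for $r=1$.

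Next I would rewrite \eqref{wn1} in the form \eqref{e1r}. For $s=1$ we directly get $\E(V\mid U)=a\cdot 1$, so $\alpha_1=a$; taking unconditional expectation in the first equation of \eqref{wn1} yields $\E V=a$, and since $V=X+Y$ is positive and non-degenerate, $a>0$. For $s=2$ we rewrite $\E(V^2\mid U)=b=\tfrac{b}{a}\cdot a=\tfrac{b}{a}\,\E(V\mid U)$, giving $\alpha_2=b/a$. Again from $\E V^2=b$ and the non-degeneracy of $V$, Jensen's inequality (in its strict form) forces $b>a^2$, in particular $\alpha_2>\alpha_1$ and $b-a^2>0$.

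Now Theorem \ref{tw2} applies with $r=1$, $\alpha_1=a$, $\alpha_2=b/a$. A routine simplification gives
\[
\alpha_2-\alpha_1=\frac{b-a^2}{a},\qquad \frac{\alpha_2}{\alpha_2-\alpha_1}=\frac{b}{b-a^2},\qquad \frac{\alpha_2}{\alpha_2-\alpha_1}-r=\frac{a^2}{b-a^2},\qquad \frac{1}{\alpha_2-\alpha_1}=\frac{a}{b-a^2},
\]
and the condition $\alpha_2/(\alpha_2-\alpha_1)>r=1$ reduces to $a^2>0$, which is automatic. Substituting these into the conclusion of Theorem \ref{tw2} yields exactly
\[
X\sim\mathcal K\!\left(\tfrac{a^2}{b-a^2},\,c-\tfrac{a^2}{b-a^2},\,\tfrac{a}{b-a^2}\right),\qquad Y\sim\mathcal G\!\left(c,\,\tfrac{a}{b-a^2}\right),
\]
for some $c>0$, as claimed.

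There is essentially no obstacle here; the corollary is a direct specialization of Theorem \ref{tw2}. The only point to be careful about is the initial verification of $a>0$ and $b>a^2$, which requires invoking the non-degeneracy of $V$ (ultimately inherited from that of $X$ and $Y$ via $V=X+Y$) to turn a Cauchy--Schwarz inequality into a strict one.
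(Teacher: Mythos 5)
Your proof is correct and follows exactly the paper's approach: the paper's own proof is the one-line observation that \eqref{wn1} implies \eqref{e1r} with $r=1$, $\alpha_1=a$, $\alpha_2=b/a$, and then Theorem \ref{tw2} applies. You merely spell out the details (verification of the moment hypotheses via $V=X+Y$, strict Jensen for $b>a^2$, and the algebraic simplification of the parameters) that the paper leaves implicit.
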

\begin{proof}
Note that \eqref{wn1} implies \eqref{e1r} with $r=1$, $\alpha_1=a$ and $\alpha_2=\tfrac{b}{a}$.
\end{proof}
\begin{corollary}
Let $X$ and $Y$ be independent positive non-degenerate random variables and $\E\,X^{-2}<\infty$, $\E\,Y^{-2}<\infty$. For $U$ and $V$ defined in \eqref{def1} assume that there exist constants $a$ and $b$ such that
\bel{wn2}
\E(V^{-1}|U)=a\quad\mbox{and}\quad\E(V^{-2}|U)=b.
\ee
Then $\tfrac{b}{b-a^2}>-1$ and there exists a constant $c>0$ such that
$$
X\sim \mathcal{K}\left(\tfrac{b}{b-a^2}+1,\,c-\tfrac{b}{b-a^2}-1,\,\tfrac{ab}{b-a^2}\right)\quad\mbox{and}\quad Y\sim\mathcal{G}\left(c,\,\tfrac{ab}{b-a^2}\right).
$$
\end{corollary}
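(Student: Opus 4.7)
The plan is to recognize this corollary as a direct specialization of Theorem \ref{tw2} to the regime of negative moments, taking $r=-1$. The hypotheses are already in the right form: we can trivially write
\[
\E(V^{0}\mid U)=1=\tfrac{1}{a}\cdot a=\tfrac{1}{a}\,\E(V^{-1}\mid U),\qquad \E(V^{-1}\mid U)=a=\tfrac{a}{b}\cdot b=\tfrac{a}{b}\,\E(V^{-2}\mid U),
\]
which is exactly \eqref{e1r} with $s\in\{r,r+1\}=\{-1,0\}$, $\alpha_{r}=\alpha_{-1}=a/b$ and $\alpha_{r+1}=\alpha_{0}=1/a$. Note that positivity of $V$ forces $a,b>0$.

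I would next check that the moment hypotheses of Theorem \ref{tw2} case~(3) are met. For $r=-1$ case~(3) applies and requires $\E X^{r-1}=\E X^{-2}<\infty$ and $\E Y^{r-1}=\E Y^{-2}<\infty$, which are precisely the assumptions of the corollary. The non-degeneracy of $X$ and $Y$ is inherited by the change-of-measure step inside the proof of Theorem \ref{tw2}, so no extra hypothesis is needed; in particular the conclusion $\alpha_{r+1}>\alpha_{r}$ of that theorem translates into the strict inequality $b>a^{2}$ automatically, and the formulas below are well defined.

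Finally, I would substitute and simplify. With our values of $\alpha_{r}$ and $\alpha_{r+1}$,
\[
\alpha_{r+1}-\alpha_{r}=\tfrac{1}{a}-\tfrac{a}{b}=\tfrac{b-a^{2}}{ab},\qquad \tfrac{\alpha_{r+1}}{\alpha_{r+1}-\alpha_{r}}=\tfrac{b}{b-a^{2}},\qquad \tfrac{1}{\alpha_{r+1}-\alpha_{r}}=\tfrac{ab}{b-a^{2}},
\]
so that $\tfrac{\alpha_{r+1}}{\alpha_{r+1}-\alpha_{r}}-r=\tfrac{b}{b-a^{2}}+1$ and $c-\tfrac{\alpha_{r+1}}{\alpha_{r+1}-\alpha_{r}}+r=c-\tfrac{b}{b-a^{2}}-1$. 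The inequality $\tfrac{\alpha_{r+1}}{\alpha_{r+1}-\alpha_{r}}>r$ provided by Theorem \ref{tw2} becomes $\tfrac{b}{b-a^{2}}>-1$, and the conclusion on the distributions becomes verbatim the one stated in the corollary.

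There is no real obstacle: the only point that needs brief verbal justification is that the current hypotheses fit case~(3) of Theorem \ref{tw2} rather than case~(1) as in the preceding corollary, which explains why the moment assumption $\E X^{-2}<\infty$, $\E Y^{-2}<\infty$ appears in the statement. The proof itself is one line invoking Theorem \ref{tw2} with the parameters identified above.
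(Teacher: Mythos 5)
Your proposal is correct and is exactly the paper's own argument: the paper's proof is the single line observing that \eqref{wn2} implies \eqref{e1r} with $r=-1$, $\alpha_0=1/a$ and $\alpha_{-1}=a/b$, and your identification of the parameters, verification of the case~(3) moment hypotheses, and the resulting algebra all check out.
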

\begin{proof}
Note that \eqref{wn2} implies \eqref{e1r} with $r=-1$, $\alpha_0=\tfrac{1}{a}$ and $\alpha_{-1}=\tfrac{a}{b}$.
\end{proof}
This results can be compared with Th. 1.1 in \cite{LHH94} which is its analog for the Lukacs property.

\subsection{Characterization through independence}
In this subsection we answer the basic question: does the independence of $U$ and $V$ for independent, positive and non-degenerate $X$ and $Y$ (with no technical assumptions of smoothness of densities or integrability) characterizes Gamma and Kummer laws? Until now the only result from the literature is the equation
$$
\E(X^{\alpha}(1+X)^{-\beta}e^{\sigma X})\,\E(Y^{\beta} e^{\sigma Y})=\E(U^{\beta}(1+U)^{-\alpha}e^{\sigma U})\,\E(V^{\alpha}e^{\sigma V}),
$$
$\alpha,\beta\in\R$, $\sigma\le 0$, obtained in \cite{Kou14} (Lem. 5.1.).

Below we give the complete solution of the characterization problem. Again we will use the change of measure technique.
\begin{theorem}\label{tw3}
Let $X$ and $Y$ be independent positive non-degenerate random variables. Define $U$ and $V$ by \eqref{def1}. Suppose that $U$ and $V$ are independent. Then there exist $a,b,c>0$ such that
$X\sim \mathcal{K}\left(a,b-a,c\right)$, $Y\sim \mathcal{G}\left(b, c\right)$
\end{theorem}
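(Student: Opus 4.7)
The plan is to reduce the claim to Theorem \ref{tw2} via a single exponential change of measure exploiting the identity \eqref{u+v}, namely $U+V=X+Y$. Fix $t>0$ and set $c_t:=\E e^{-t(X+Y)}$, which is finite since $e^{-t(X+Y)}\le 1$. Introduce the tilted probability $d\tilde{\P}:=c_t^{-1}e^{-t(X+Y)}\,d\P$. The crucial observation is that the Radon--Nikodym density factorizes \emph{simultaneously} in both coordinate systems:
$$e^{-t(X+Y)}=e^{-tX}e^{-tY}=e^{-tU}e^{-tV}.$$

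From the first factorization I deduce that $\X,\Y$ stay independent and non-degenerate under $\tilde{\P}$ (non-degeneracy because the tilt has strictly positive density); from the second, $\U=\Y/(1+\X)$ and $\V=\X(1+\U)$ are also $\tilde{\P}$-independent: for bounded measurable $\phi,\psi$,
$$\E_{\tilde{\P}}[\phi(U)\psi(V)]=c_t^{-1}\E[e^{-tU}\phi(U)]\E[e^{-tV}\psi(V)],$$
by the $\P$-independence of $U$ and $V$, and the marginals under $\tilde{\P}$ split accordingly.

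The exponential tilt makes every positive moment of $\X,\Y$ finite under $\tilde{\P}$, and since $V\le X+Y$ we also have $\E_{\tilde{\P}}\V^2<\infty$. Independence of $\U$ and $\V$ then supplies \eqref{e1r} for $s\in\{1,2\}$ with $\alpha_s=\E_{\tilde{\P}}\V^s/\E_{\tilde{\P}}\V^{s-1}$, so Theorem \ref{tw2} with $r=1$ applies and yields $\X\sim\mathcal{K}(a,b-a,\gamma)$ and $\Y\sim\mathcal{G}(b,\gamma)$ under $\tilde{\P}$, for some $a,b,\gamma>0$.

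Reverting the tilt, $f_X(x)\propto x^{a-1}e^{-(\gamma-t)x}(1+x)^{-b}$ and $f_Y(y)\propto y^{b-1}e^{-(\gamma-t)y}$. The main subtlety, and the step I expect to need the most care, is to exclude $\gamma\le t$; this is forced by integrability of $f_Y$ at infinity, since $y^{b-1}e^{-(\gamma-t)y}$ is not integrable on $(0,\infty)$ unless $\gamma-t>0$. Setting $c:=\gamma-t>0$ yields $X\sim\mathcal{K}(a,b-a,c)$ and $Y\sim\mathcal{G}(b,c)$, as required. Since the distributions of $X,Y$ do not depend on the auxiliary parameter $t>0$, any choice of $t$ completes the argument.
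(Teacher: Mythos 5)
Your proposal is correct and follows essentially the same route as the paper: the same exponential tilt by $e^{-t(X+Y)}$, the same observation that the identity $U+V=X+Y$ makes the tilting factor split in both coordinate systems so that independence and all needed moments survive under the tilted measure, and the same reduction to Theorem \ref{tw2} (you fix $r=1$ where the paper keeps $r$ generic). Your explicit integrability argument forcing $c=\gamma-t>0$ when reverting the tilt is a welcome clarification of a step the paper passes over quickly.
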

\begin{proof}
Take $\eta<0$ and consider a random vector $(\X, \Y)$ with distribution $$ d\P _{\widetilde{X}, \widetilde{Y}}(x,y) = \frac{ e^{\eta(x+y)}d \P _{X,Y}(x,y)}{\E e^{\eta(X+Y)}}$$
which is defined on some probability space, possibly different than the space $X$ and $Y$ are defined on. 

We also set $\U = \frac{\Y}{1+\X}$ and
$\widetilde{V} = \widetilde{X}(1 + \widetilde{U})$. For $s, t\leq 0$, due to \eqref{u+v} we have

$$
\E\,e^{s \U + t \V} = \E\, e^{s \frac{\Y}{1+\X} + t \X(1+ \frac{\Y}{1+\X} )}= \tfrac{\E\,\exp\left[s \frac{Y}{1+X} + t X\left(1+\frac{Y}{1+X}\right)\right] e^{\eta(X+Y)}}{\E\,e^{\eta(X+Y)}}
=\tfrac{\E\, e^{s U + t V}\, e^{\eta(U+V)}}{\E\,e^{\eta(X+Y)}}
=\tfrac{\E \,e^{s U +\eta  U} \E\, e^{t V+\eta V}}{\E\,e^{\eta(X+Y)}}.
$$
Therefore
$$
\E e^{s \U + t \V} =\E\,e^{s \U}\,\E\,e^{t \V},
$$
where $\E\,e^{s \U}=\tfrac{\E\,e^{sU+\eta(X+Y)}}{\E\,e^{\eta(X+Y)}}$ and $\E\,e^{t \V}=\tfrac{\E\,e^{tV+\eta(X+Y)}}{\E\,e^{\eta(X+Y)}}$, and thus $\U$ and $\V$ are independent random variables.

Note that for $r>1$
\bel{war1}
\E (\V^r)=\tfrac{\E\, V^{r+1} e^{\eta(X+Y)}}{\E\,e^{\eta(X+Y)}}<\infty.
\ee
 Similarly, one can obtain the finiteness of $\E(\V^{r-1})$ and $\E(\V^{r+1})$.

 Due to independence of $\U$ and $\V$ we conclude that $\E(\V^{r-1}|\U)$, $\E(\V^r|\U)$ and $\E(\V^{r+1}|\U)$ are non-random. Therefore, the assumptions of Th. \ref{tw2} are satisfied with $\alpha_s = \E(\V^s )/\E(\V^{s-1})$ for $s\in\{r, r+1\}$. Thus, $\X\sim \mathcal{K}\left( \frac{\alpha_{r+1}}{\alpha_{r+1} - \alpha_r} -r , b-\frac{\alpha_{r+1}}{\alpha_{r+1} - \alpha_r} +r,\frac{1}{\alpha_{r+1} - \alpha_r} \right)$, ${\Y\sim \mathcal{G}\left(b,\frac{1}{\alpha_{r+1} - \alpha_r}\right)}$ where $b>0$. Eventually, we conclude that $X$ and $Y$ have required distributions with $a =\frac{\alpha_{r+1}}{\alpha_{r+1} - \alpha_r} -r>0$, $b>0$ and ${c= \frac{1}{\alpha_{r+1} - \alpha_r}-\eta>0}$ .

\end{proof}
\subsection{Remark on multivariate version}
In \cite{PW16} a multivariate analogue of the characterization of Kummer and gamma laws was considered. The approach was through a tree language and was in parallel to an earlier result of this kind involving a multivariate version of the Matsumoto-Yor property, see \cite{MW04} and \cite{B15}. However the characterization given in \cite{MW04} did not need any assumptions regarding density and its smoothness. Unfortunately, in the multivariate characterization given in \cite{PW16} (Th. 4) we assumed that random vector $\textbf{X}$ has density which is continuously differentiable. It was necessary for the first step in the inductive proof. Now, due to the result of Th. \ref{tw3}, we can omit this regularity condition. In order to state an improved version of Th. 4 from \cite{PW16}, first, we have to recall some definitions.

A graph $G=(V,E)$, where $V$ is the set of nodes and $E\subseteq \{\{u,v\}: u,v\in V, u\neq v\}$ is the set of edges, is called a \textit{tree}, if it is connected and acyclic. A vertex of degree 1 is called a \emph{leaf}.

Let $T=(V,E)$ be a tree of size $p\geq 2$. For a fixed root $r\in V$, we direct $T$ from the root towards leaves and denote such a directed tree by $T_r$. Having the tree directed, we can say that node $i$ is a \textit{child} of vertex $j$ (or $j$ is a \textit{parent} of $i$) if and only if $\{i,j\}\in E$ and the tree is directed from $j$ to $i$ (note that every node, unless it is a leaf, has at least one child and every node but the root has exactly one parent). The set of all children of $i$ in $T_r$ will be denoted by $\mathfrak{c}_r(i)$ and the parent of vertex $i$ by  $\mathfrak{p}_r(i)$.

For any $r\in V$ (and hence $T_r$) and fixed set of parameters $c_{i,j}>0$, $i,j\in V$ ($c_{i,i}=:c_i$) we define a transformation $\Phi^T_r: \mathbb{R}_+^p \rightarrow \mathbb{R}_+^p $ by
\bel{frt}\Phi_r^T (s_i, i\in V) = ( s_{i,(r)}, i\in V ),\ee where
\bel{gw}s_{i,(r)}=s_i\,\displaystyle{\prod_{j\in \mathfrak{c}_r(i)}\,\left(1+\frac{c_{i,j}}{c_i} s_{j,(r)}\right)},
\ee
where by convention an empty product is equal to $1$, i.e., if $V\ni i\ne r$ is a leaf then $s_{i,(r)}=s_i$. The definition  \eqref{gw} is inverse recursive with a starting point being any vertex with maximal distance from the root $r$.

\begin{corollary}\label{tw_char}
	Let $T=(V,E)$ be a tree of size $p$. Let $\mathbf{C}\in C_T$ and $\Phi_r^T$ be defined by \eqref{frt} and \eqref{gw}. Let $\mathbf{X}=(X_i, i\in V)$ be a $p$-dimensional random vector. Suppose that for every  $r\in V$, which is a leaf of $T$, the components of the random vector $\mathbf{X}_{(r)}=\Phi_r^T(\mathbf{X})= (X_{i,(r)},i\in V)$ are independent. Then there exist $\textbf{a}=(a_i, i\in V)\in (0,\infty )^p$ and $c>0$ such that $$X_{r,(r)}\sim \mathcal{G}(a_r, cc_r)\quad \textrm{and}\quad
	\frac{c_{\mathfrak{p}_r(i),i}}{c_{\mathfrak{p}_r(i)}}\,X_{i,(r)}\sim \mathcal{K}\left(a_i,a_{\mathfrak{p}_r(i)}-a_i,c\frac{c_{\mathfrak{p}_r(i)}\,c_i}{c_{\mathfrak{p}_r(i),i}}\right)\; \mbox{for } i\in V\backslash \{r\}.
	$$
\end{corollary}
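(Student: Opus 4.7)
My plan is to follow the inductive scheme of the proof of Th.~4 in \cite{PW16}; the only modification required is that, at the base of the induction, the bivariate characterization used there (which relied on a smooth density of $\mathbf{X}$) is replaced by Th. \ref{tw3} of this paper, which carries no regularity hypothesis. The combinatorial part of the argument is untouched.

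I will induct on the tree size $p$. For $p=2$ the tree is a single edge, both of its vertices are leaves, and the hypothesis of the corollary at the two leaf-roots, combined with an appropriate rescaling of the components of $\mathbf{X}$ by the constants $\mathbf{C}\in C_T$, matches the assumptions of Th. \ref{tw3}. That theorem then yields the Kummer and gamma marginals along with all parameters, without any density or moment condition.

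For $p>2$, I would carry out the inductive step by pruning: fix a leaf $\ell\in V$ different from some distinguished leaf $r$ and let $q:=\mathfrak{p}_r(\ell)$. On the subtree $T'$ of size $p-1$ obtained by deleting $\ell$ and the edge $\{q,\ell\}$, the recursive definition \eqref{gw} shows that, for every leaf $r'$ of $T'$, the vector $\Phi_{r'}^{T'}((X_i)_{i\in V'})$ differs from $\Phi_{r'}^{T}(\mathbf{X})$ either by the removal of the $\ell$-coordinate or by a multiplicative factor at the $q$-coordinate that is a function of $X_\ell=X_{\ell,(r')}$, which is independent of the remaining coordinates of $\Phi_{r'}^{T}(\mathbf{X})$ by the hypothesis of the corollary. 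Hence the leaf-root independence descends from $T$ to $T'$; by the inductive hypothesis the joint law of $(X_i)_{i\in V'}$ is of the required Kummer/gamma form, and a final invocation of Th. \ref{tw3} along the edge $\{q,\ell\}$---with $X_q$ already known to be gamma-distributed---identifies $X_\ell$ and the remaining parameter $a_\ell$.

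The only genuine obstacle is precisely this base step: without Th. \ref{tw3} one is forced, as in \cite{PW16}, to assume that $\mathbf{X}$ admits a continuously differentiable density in order to run the bivariate characterization. Once Th. \ref{tw3} is available, the inductive bookkeeping of \cite{PW16} carries through verbatim, and the corollary follows under exactly the hypotheses of Th.~4 of \cite{PW16} but with the smoothness requirement removed. The only additional routine verification is that the rescalings $c_{\mathfrak{p}_r(i),i}/c_{\mathfrak{p}_r(i)}$ appearing in the statement are consistent with those dictated by \eqref{gw} and by the parametrization of the Kummer/gamma families identified by Th. \ref{tw3} at each application.
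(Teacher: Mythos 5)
Your proposal matches the paper's own proof: the authors likewise argue by induction on $p$, obtain the base case $p=2$ from Th.~\ref{tw3} (exactly as Th.~3 of \cite{PW16} was obtained from the smooth bivariate characterization), and then follow the inductive step of the proof of Th.~4 in \cite{PW16} verbatim, the whole point being that the smoothness of the density was only ever needed at the base of the induction. Your additional sketch of the pruning argument for the inductive step is consistent with that scheme.
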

\begin{proof}
	We use induction with respect to $p =|V|$. The case $p = 2$ can be obtained from Th. \ref{tw3} in the same way as Th. 3 in \cite{PW16}. Then we just follow the proof of Th. 4 of \cite{PW16}.
\end{proof}

\section{Characterizations related to the Koudou and Vallois property}
In this section we consider the property discovered in \cite{KV12} and discussed also e.g. in \cite{KV11} and \cite{Wes15}:  $X$ and $Y$ are independent and $U$ and $V$ are defined through \eqref{juvi}.

\subsection{Characterization through constancy of regressions}
 Our point of departure is the main result of \cite{Wes15} which is a counterpart of Th. \ref{tw1} in this paper. We recall it here.
\begin{theorem}\label{KV1}
Let $X$ and $Y$ be independent positive non-degenerate random variables and $\E\,X^{-1}<\infty$. Define $U$ and $V$ through \eqref{juvi}. If
\bel{reg}
\E(U|V)=\alpha \qquad \mbox{and}\qquad \E(U^{-1}|V)=\beta
\ee
for real constants $\alpha$ and $\beta$ then $0<\alpha<1<\beta$ and there exists a constant $c>0$ such that $$X\sim \mathcal{K}\left(1+\tfrac{1-\alpha}{\alpha\beta},\tfrac{(1-\alpha)(\beta-1)}{\alpha\beta},c\right)\qquad\mbox{and}\qquad Y\sim\mathcal{G}\left(\tfrac{(1-\alpha)(\beta-1)}{\alpha\beta},\,c\right).$$
\end{theorem}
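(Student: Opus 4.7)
The strategy is to convert the two constancy-of-regression hypotheses into local differential identities for the Laplace transforms of $X$ and $Y$, and then to identify those Laplace transforms by solving the resulting ODE and invoking uniqueness.

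First I would rewrite the hypotheses in a more tractable form. Since $U=\tfrac{X(V+1)}{(X+1)V}$ and $V$ is $\sigma(V)$-measurable, \eqref{reg} is equivalent to
$$\E\!\left[\tfrac{1}{X+1}\,\Big|\,V\right]=(1-\alpha)+\tfrac{\alpha}{V+1},\qquad \E\!\left[\tfrac{1}{X}\,\Big|\,V\right]=(\beta-1)+\tfrac{\beta}{V}.$$
The bound $\alpha\in(0,1)$ is immediate from $U\in(0,1)$ a.s.\ (since $Y>0$ forces $(X+Y)^{-1}<X^{-1}$), and applying the conditional Jensen inequality to $u\mapsto u^{-1}$ yields $\beta\ge 1/\alpha>1$.

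Next introduce $\phi(s)=\E\,e^{-sX}$, $\psi(s)=\E\,e^{-sY}$, together with the auxiliary transforms $\tilde\phi(s)=\E[e^{-sX}/(X+1)]$ and $\hat\phi(s)=\E[e^{-sX}/X]$; the latter is finite on $[0,\infty)$ owing to the assumption $\E X^{-1}<\infty$, and a direct differentiation gives the local ODEs $\tilde\phi'=\tilde\phi-\phi$ and $\hat\phi'=-\phi$. Multiplying each of the two rewritten regression identities by $e^{-sV}$, taking expectation, and using the independence of $X$ and $Y$ together with the representations $(V+1)^{-1}=\int_0^\infty e^{-t(V+1)}\,dt$ and $V^{-1}=\int_0^\infty e^{-tV}\,dt$, one obtains two integral identities relating $\phi,\psi,\tilde\phi,\hat\phi$. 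Differentiating each of them once in $s$ and substituting the ODEs for $\tilde\phi$ and $\hat\phi$ eliminates the integral terms and leaves the clean local identities
$$(1-\alpha)(\phi\psi)'=\tilde\phi\,\psi',\qquad (\beta-1)\bigl[(\phi\psi)'-\phi\psi\bigr]=\hat\phi\,\psi'.$$

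Solving these two relations for $\tilde\phi$ and $\hat\phi$, differentiating once more and reusing the auxiliary ODEs produces a closed second-order linear differential equation in $\phi$ and $\psi$ alone. Matching this ODE with the confluent hypergeometric equation satisfied by the Laplace transform of $\mathcal{K}(a,b,c)$ (a Tricomi $U$-function of the form $U(a,1-b,c+s)$) paired with the gamma transform $\psi(s)=(c/(c+s))^b$, one identifies $a,b,c$ as the parameters appearing in the statement, and the uniqueness of the Laplace transform yields $X\sim\mathcal{K}(a,b,c)$ and $Y\sim\mathcal{G}(b,c)$. The main obstacle is the non-local character of $\tilde\phi$ and $\hat\phi$: the regression hypotheses do not close as ODEs at the outset, and achieving closure requires two successive differentiations with careful bookkeeping of the auxiliary ODEs $\tilde\phi'=\tilde\phi-\phi$ and $\hat\phi'=-\phi$. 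A secondary delicate point is the final identification step, where one must argue that among Laplace transforms of probability measures on $(0,\infty)$ with $\phi(0)=\psi(0)=1$ and the appropriate decay as $s\to\infty$, the derived ODE admits only the stated Kummer/gamma candidates; this uniqueness argument is parallel to the one used at the end of the proof of Theorem \ref{tw1} in this paper.
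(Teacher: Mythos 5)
The paper never proves Theorem \ref{KV1}: it is imported verbatim from \cite{Wes15} as the ``point of departure'' of Section~3, so there is no in-paper proof to compare against. Your plan is essentially the Laplace-transform argument of that cited source, and it is sound. I checked the key reductions: multiplying $\E[(1+X)^{-1}\,|\,V]=(1-\alpha)+\alpha(1+V)^{-1}$ and $\E[X^{-1}\,|\,V]=(\beta-1)+\beta V^{-1}$ by $e^{-sV}$, taking expectations, and differentiating once (using $G'=G-\phi\psi$ for $G(s)=\E[e^{-sV}/(1+V)]$ and $H'=-\phi\psi$ for $H(s)=\E[e^{-sV}/V]$, both finite thanks to $\E X^{-1}<\infty$) does yield exactly your two identities. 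Two points need to be made explicit, though. First, the ``closure'' is not a single linear ODE: substituting $\tilde\phi=(1-\alpha)(\phi\psi)'/\psi'$ and $\hat\phi=(\beta-1)[(\phi\psi)'-\phi\psi]/\psi'$ into $\tilde\phi'=\tilde\phi-\phi$ and $\hat\phi'=-\phi$ gives a coupled nonlinear system; the saving grace is that subtracting the two resulting equations and cancelling $\phi>0$ yields $(\psi/\psi')'\equiv\tfrac{1}{\beta-1}-\tfrac{\alpha}{1-\alpha}$, a constant, which identifies $\psi$ as a gamma Laplace transform first (non-degeneracy of $Y$ excludes the constant being $\ge 0$), and only then does the remaining relation become a genuine confluent hypergeometric equation for $\phi$ in the variable $c+s$, with the Tricomi branch $U(a,1-b,c+s)$ selected by boundedness of $\phi$ --- the selection step you rightly flag as delicate. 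Second, carrying the computation to the end gives $a=1+\tfrac{1-\alpha}{\alpha\beta-1}$ and $b=\tfrac{(1-\alpha)(\beta-1)}{\alpha\beta-1}$; the denominators should be $\alpha\beta-1$, not $\alpha\beta$ (check against the forward property: $U\sim \mathrm{B}_I(a,b)$ gives $\alpha=a/(a+b)$, $\beta=(a+b-1)/(a-1)$, hence $\alpha\beta-1=b/((a+b)(a-1))$), so the statement as printed contains a typo that your final identification step would have exposed had you carried it out. For contrast, the one proof of this type actually written in the paper (Theorem \ref{tw1}) runs differently, via moment recursions for $g_k=\E(1+X)^{-k}$ and $h_k=\E Y^{k+1}/\E Y^k$; that route trades ODEs for difference equations plus a generating-function uniqueness argument, but both end at the same Tricomi-function identification.
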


Since $U\in(0,1)$ $\P$-a.s. one can consider conditional moments $\E(U^r|V)$, $r\ge 0$  without any additional assumptions. In the next result, we prove that some simple relations between such conditional moments characterize the Kummer and gamma laws of $X$ and $Y$. It is an analogue of Th. \ref{tw2} and the results o for the Lukacs and Matsumoto-Yor property obtained in \cite{CH03} and \cite{CH04}.

\begin{theorem}\label{KV2}
Let $X$ and $Y$ be independent positive non-degenerate random variables. Let $r\in\R$ be a fixed number. In case $r<1$ assume $\E\,X^{r-1}<\infty$. 

Define $U$ and $V$ through \eqref{juvi}. Let for $s\in\{r,r+1\}$ and $\alpha_s\in\R$
\bel{regr}
\E(U^s|V)=\alpha_s\,\E(U^{s-1}|V).
\ee

Then  $\alpha_r,\alpha_{r+1}\in(0,1)$, $\tfrac{\alpha_r(1-\alpha_{r+1})}{\alpha_{r+1}}>r-1$ and there exists $c_r>0$ such that $$X\sim\mathcal{K}\left(\tfrac{\alpha_r(1-\alpha_{r+1})}{\alpha_{r+1}}-r+1,\tfrac{(1-\alpha_r)(1-\alpha_{r+1})}{\alpha_{r+1}},c_r\right)
\qquad\mbox{and}\qquad Y\sim\mathcal{G}\left(\tfrac{(1-\alpha_r)(1-\alpha_{r+1})}{\alpha_{r+1}},\,c_r\right).$$
\end{theorem}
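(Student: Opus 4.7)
The plan is to mimic the change-of-measure strategy of Theorem \ref{tw2}: find a weight depending only on $X$ that converts both ratio hypotheses in \eqref{regr} into the pair of constant-regression conditions needed to invoke Theorem \ref{KV1}. The preparatory observation is that, since $V=X+Y$, we may write $U=X(V+1)/(V(X+1))$, a function of $X$ and $V$ only. After absorbing the factor $((V+1)/V)^{s-1}$ into an arbitrary test function $\phi$, each hypothesis $\E(U^{s}h(V))=\alpha_s\E(U^{s-1}h(V))$ is equivalent to the unified identity
\[
\E\bigl(X^{s-1}(X+1)^{-(s-1)}\,U\,\phi(V)\bigr)=\alpha_s\,\E\bigl(X^{s-1}(X+1)^{-(s-1)}\,\phi(V)\bigr),\quad s\in\{r,r+1\}.
\]

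I would then introduce the auxiliary probability $d\P^{*}=X^{r}(X+1)^{-r}d\P/c$. Since the Radon--Nikodym derivative is a function of $X$ alone, independence, positivity and non-degeneracy of $X$ and $Y$ under $\P^{*}$ are automatic and the marginal law of $Y$ is unchanged. The bound $X^{r-1}(X+1)^{-r}\le X^{r-1}\wedge X^{-1}$ makes both $c$ and $\E_{\P^{*}}X^{-1}$ finite: for $r<1$ this is precisely where the assumption $\E X^{r-1}<\infty$ is used, while for $r\ge 1$ elementary estimates suffice. The unified identity with $s=r+1$ gives directly $\E_{\P^{*}}(U|V)=\alpha_{r+1}$; for the reciprocal moment, the algebraic simplification $X^{r}(X+1)^{-r}U^{-1}=X^{r-1}(X+1)^{-(r-1)}V/(V+1)$ combined with the $s=r$ case applied to the test function $\phi(V)=V/(V+1)\,\phi_{0}(V)$ yields $\E_{\P^{*}}(U^{-1}|V)=1/\alpha_{r}$.

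Theorem \ref{KV1} then applies to $(X,Y)$ under $\P^{*}$ with $\alpha=\alpha_{r+1}$ and $\beta=1/\alpha_{r}$. This forces $\alpha_r,\alpha_{r+1}\in(0,1)$ and identifies $Y$ as $\mathcal{G}((1-\alpha_{r})(1-\alpha_{r+1})/\alpha_{r+1},\,c_{r})$; since the marginal of $Y$ is unaffected by the change of measure, this is already the assertion about $Y$. Multiplying the Kummer density of $X$ under $\P^{*}$ by $x^{-r}(x+1)^{r}$ recovers the density of $X$ under $\P$: only the first Kummer parameter shifts, by $-r$, giving precisely the triple stated, and the inequality $\alpha_{r}(1-\alpha_{r+1})/\alpha_{r+1}>r-1$ is nothing but the positivity of that first parameter. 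The main obstacle is locating the interpolating weight; once the identity $X^{s-1}(X+1)^{-(s-1)}U=X^{s}(X+1)^{-s}(V+1)/V$ is spotted, $X^{r}(X+1)^{-r}$ is the natural interpolation between the cases $s=r$ and $s=r+1$ of the unified hypothesis and simultaneously explains why the two assumptions produce the two different regressions required by Theorem \ref{KV1}.
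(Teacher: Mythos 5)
Your proposal is correct and follows essentially the same route as the paper: the change of measure with weight $(X/(1+X))^{r}$, the reduction of \eqref{regr} to the two constant regressions $\E(\tilde U\,|\,\tilde V)=\alpha_{r+1}$ and $\E(\tilde U^{-1}\,|\,\tilde V)=1/\alpha_r$ required by Theorem \ref{KV1}, and the recovery of the law of $X$ by undoing the tilt, with $\E X^{r-1}<\infty$ entering exactly where you place it. (The differences are cosmetic --- the paper realizes the tilted law on a new probability space and tests against $e^{uV}$ rather than general $\phi(V)$ --- and your pointwise bound $X^{r-1}(X+1)^{-r}\le X^{r-1}\wedge X^{-1}$ actually fails for $r<0$, though the finiteness it is meant to deliver still follows from $\E X^{r-1}<\infty$ up to a constant factor.)
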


\begin{proof} Note that if $r\ge 1$ then all the moments we need: $\E\,U^{r+1}$, $\E\,U^r$ and $\E\,U^{r-1}$ are finite since $U\in(0,1)$ and in the case $r<1$ finiteness of these moments follows from the assumption $\E\,X^{r-1}<\infty$ since then $U^{r-1}<X^{r-1}$. 
	 	
Due to measurability of $\tfrac{X+Y}{1+X+Y}$ with respect to $\sigma(X+Y)$ we can rewrite \eqref{regr} as
$$
\E\left(\left.\left(\tfrac{X}{1+X}\right)^s\right|X+Y\right)\,\tfrac{1+X+Y}{X+Y}
=\alpha_s\E\left(\left.\left(\tfrac{X}{1+X}\right)^{s-1}\right|X+Y\right),\quad s=r,r+1.
$$
Equivalently, for $u\le 0$ we have
\bel{reg1}
\E\,\left(\tfrac{X}{1+X}\right)^s\tfrac{1+X+Y}{X+Y}\,e^{u(X+Y)}=\alpha_s\,\E\,\left(\tfrac{X}{1+X}\right)^{s-1}\,e^{u(X+Y)}.
\ee
Denote by $\tilde{X}$ a random variable with distribution
\bel{tix}
\P_{\tilde{X}}(dx)=\tfrac{\left(\tfrac{x}{1+x}\right)^r}{\E\,\left(\tfrac{X}{1+X}\right)^r}\,\P_X(dx),
\ee
which is defined on some probability space, possibly different than the probability space on which $X$ and $Y$ are defined.
Note that
\bel{inve}
\E\,\tilde{X}^{-1}=\tfrac{\E\,\tfrac{X^{r-1}}{(1+X)^r}}{\E\,\left(\tfrac{X}{1+X}\right)^r}<\infty.
\ee
Consider a rv $\tilde{Y}$, defined on the same probability space as $\tilde{X}$, in such a way that  $\tilde{X}$ and $\tilde{Y}$ are independent and $\tilde{Y}\stackrel{d}{=}Y$. Then, dividing both sides of \eqref{reg1} by  $\E\,\left(\tfrac{X}{1+X}\right)^r$, we obtain
\bel{reg3}
\E\,\tfrac{1+\tilde{X}+\tilde{Y}}{\tilde{X}+\tilde{Y}}\,e^{u(\tilde{X}+\tilde{Y})}
=\alpha_r\,\E\,\left(\tfrac{\tilde{X}}{1+\tilde{X}}\right)^{-1}\,e^{u(\tilde{X}+\tilde{Y})}
\ee
and
\bel{reg4}
\E\,\tfrac{\tilde{X}}{1+\tilde{X}}\tfrac{1+\tilde{X}+\tilde{Y}}{\tilde{X}+\tilde{Y}}\,e^{u(\tilde{X}+\tilde{Y})}
=\alpha_{r+1}\,\E\,\,e^{u(\tilde{X}+\tilde{Y})}.
\ee

Denote $\tilde{U}=\tfrac{1+\tfrac{1}{\tilde{X}+\tilde{Y}}}{1+\tfrac{1}{\tilde{X}}}$ and $\tilde{V}=\tilde{X}+\tilde{Y}$ and note that due to \eqref{inve} we have $\E\,\tilde{U}^{-1}<1+\E\,\tilde{X}^{-1}<\infty$. That is \eqref{reg3} and \eqref{reg4} are equivalent to
$$
\E\left(\tilde{U}^{-1}|\tilde{V}\right)=\tfrac{1}{\alpha_r}
\qquad\mbox{and}\qquad
\E\left(\tilde{U}|\tilde{V}\right)=\alpha_{r+1}.
$$

Then we see that \eqref{reg} with $\tilde{X}$ and $\tilde{Y}$ instead of $X$ and $Y$, respectively, and with $\beta=\tfrac{1}{\alpha_r}$ and $\alpha=\alpha_{r+1}$, is satisfied. Consequently, Th. \ref{KV1} implies  $\alpha_r,\,\alpha_{r+1}\in(0,1)$ and for some constant $c_r>0$
$$\tilde{X}\sim \mathcal{K}\left(1+\tfrac{\alpha_r(1-\alpha_{r+1})}{\alpha_{r+1}},\tfrac{(1-\alpha_r)(1-\alpha_{r+1})}{\alpha_{r+1}},c_r\right)\qquad\mbox{and}\qquad \tilde{Y}\sim\mathcal{G}\left(\tfrac{(1-\alpha_r)(1-\alpha_{r+1})}{\alpha_{r+1}},\,c_r\right).$$

Finally, from \eqref{tix} we conclude that the density of $X$ exists and is of the form
$$
f(x)\propto \tfrac{x^{1+\tfrac{\alpha_r(1-\alpha_{r+1})}{\alpha_{r+1}}-r}}{(1+x)^{\tfrac{1}{\alpha_{r+1}}-r}}\,e^{-c_rx}\,I_{(0,\infty)}(x).
$$
Consequently, integrability of $f$ implies
$\tfrac{\alpha_r(1-\alpha_{r+1})}{\alpha_{r+1}}>r-1$ and thus the distribution of $X$ is as asserted.
\end{proof}

Note that by taking a special positive value of $r$ in Th. \ref{KV2} the following result can be obtained immediately.

\begin{corollary}\label{r=2}
Let $X$ and $Y$ be independent positive non-degenerate random variables. For $U$ and $V$ defined in \eqref{juvi} assume that there exist constants $a$ and $b$ such that
\bel{rrr}
\E(U|V)=a\quad\mbox{and}\quad\E(U^2|V)=b.
\ee
Then $0<b<a<1$ and there exists a constant $c>0$ such that
$$
X\sim \mathcal{K}\left(\tfrac{a(a-b)}{b},\,\tfrac{(1-a)(b-a)}{b},\,c\right)\quad\mbox{and}\quad Y\sim\mathcal{G}\left(\tfrac{(1-a)(b-a)}{b},\,c\right).
$$
\end{corollary}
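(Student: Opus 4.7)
The plan is to specialize Theorem \ref{KV2} to $r=1$. First I would recast the two hypotheses in \eqref{rrr} as two instances of \eqref{regr}: trivially $\E(U|V)=a\cdot\E(U^0|V)$ identifies $\alpha_1=a$, while (since $U>0$ forces $a>0$) dividing $\E(U^2|V)=b$ by $\E(U|V)=a$ rewrites it as $\E(U^2|V)=(b/a)\,\E(U|V)$, identifying $\alpha_2=b/a$.

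Next I would verify that Theorem \ref{KV2} applies here with no further hypothesis: its auxiliary integrability condition $\E X^{r-1}<\infty$ is imposed only when $r<1$, and is therefore vacuous in the present case $r=1$. Invoking the theorem immediately yields $\alpha_1,\alpha_2\in(0,1)$, which translates into $0<a<1$ and $b<a$; combined with the conditional Jensen inequality $b=\E(U^2|V)\ge(\E(U|V))^2=a^2$, this delivers the asserted range $0<b<a<1$.

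The final step is pure substitution: inserting $\alpha_r=a$ and $\alpha_{r+1}=b/a$ into the Kummer and gamma parameter formulas supplied by Theorem \ref{KV2} and simplifying reproduces the closed forms displayed in the statement, with the scale $c=c_r$ inherited unchanged. There is no real obstacle in the argument; it is routine bookkeeping on top of the already-established Theorem \ref{KV2}, and the proof essentially reduces to a one-line reference to that theorem together with the correct identification of $r$ and $(\alpha_s)_{s\in\{r,r+1\}}$.
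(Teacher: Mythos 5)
Your proof is correct and is essentially the paper's own argument: both reduce the corollary to Theorem \ref{KV2} with $r=1$, $\alpha_1=a$ and $\alpha_2=b/a$, noting that the integrability hypothesis of that theorem is vacuous for $r=1$. The appeal to conditional Jensen is superfluous, since $\alpha_1=a\in(0,1)$ and $\alpha_2=b/a\in(0,1)$ already give $0<b<a<1$ directly.
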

\begin{proof}
Note that \eqref{rrr} implies \eqref{regr} with $r=1$, $\alpha_1=a$ and $\alpha_2=\tfrac{b}{a}$.
\end{proof}

This result can be compared with one of the main cases considered in \cite{BH65} (see (i) od its Sec. 3), which is its analogue for the Lukacs property.

For a special negative value $r$ in Th. \ref{KV2} we obtain another interesting particular case.

\begin{corollary}\label{dwa}
Let $X$ and $Y$ be independent positive non-degenerate random variables and $\E\,X^{-2}<\infty$. For $U$ and $V$ defined in \eqref{juvi} assume that there exist constants $a$ and $b$ such that
\bel{rrr1}
\E(U^{-1}|V)=a\quad\mbox{and}\quad\E(U^{-2}|V)=b.
\ee
Then $1<a<b$ and there exists a constant $c>0$ such that
$$
X\sim \mathcal{K}\left(\tfrac{a(a-1)}{b},\,\tfrac{(a-1)(b-a)}{b},\,c\right)\quad\mbox{and}\quad Y\sim\mathcal{G}\left(\tfrac{(a-1)(b-a)}{b},\,c\right).
$$
\end{corollary}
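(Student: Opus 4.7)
The plan is to reduce the assumptions to Theorem \ref{KV2} with $r=-1$ and then read off the parameters, exactly in parallel with the proof of Corollary \ref{r=2}. First I would rewrite \eqref{rrr1} in the multiplicative form \eqref{regr}. Taking $s=0$ yields $1=\E(U^0|V)=\alpha_0\,\E(U^{-1}|V)=\alpha_0\,a$, hence $\alpha_0=1/a$. Taking $s=-1$ yields $a=\E(U^{-1}|V)=\alpha_{-1}\,\E(U^{-2}|V)=\alpha_{-1}\,b$, hence $\alpha_{-1}=a/b$. The hypothesis $\E X^{-2}<\infty$ is precisely the condition $\E X^{r-1}<\infty$ required in Theorem \ref{KV2} for $r=-1<1$, so the theorem applies.

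Invoking Theorem \ref{KV2} gives $\alpha_{-1},\alpha_0\in(0,1)$, which translate into $a<b$ and $a>1$, i.e., $1<a<b$, as claimed. Substituting $\alpha_{-1}=a/b$ and $\alpha_0=1/a$ into the explicit formulas of Theorem \ref{KV2}, and identifying $c_r$ with $c$, produces the stated Kummer and gamma distributions. A direct calculation confirms $\tfrac{(1-\alpha_r)(1-\alpha_{r+1})}{\alpha_{r+1}}=\tfrac{(a-1)(b-a)}{b}$, which is both the second parameter of $X$ and the shape parameter of $Y$, and is positive on the range $1<a<b$, so that the gamma law is well-defined.

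There is no genuine obstacle; the whole content is packaged inside Theorem \ref{KV2}, and the proof should be essentially one sentence mirroring that of Corollary \ref{r=2}. The only noteworthy point is that, for the negative value $r=-1$, the moment condition $\E X^{-2}<\infty$ is genuinely needed, in contrast to Corollary \ref{r=2} where $U\in(0,1)$ automatically guarantees finiteness of all positive conditional moments; here it is precisely what makes $\E(U^{-2}|V)$ well-defined and what triggers the integrability of $1/\widetilde X$ in the change-of-measure step inside the proof of Theorem \ref{KV2}.
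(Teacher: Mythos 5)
Your proposal is exactly the paper's proof: the authors' entire argument is the one-line observation that \eqref{rrr1} gives \eqref{regr} with $r=-1$, $\alpha_0=\tfrac1a$ and $\alpha_{-1}=\tfrac ab$, after which Theorem \ref{KV2} is invoked; your identification of the $\alpha_s$, your check that $\E\,X^{-2}<\infty$ is the hypothesis $\E\,X^{r-1}<\infty$ for $r=-1$, and your derivation of $1<a<b$ from $\alpha_{-1},\alpha_0\in(0,1)$ all match.

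One caveat: you assert that substitution into Theorem \ref{KV2} ``produces the stated Kummer and gamma distributions,'' but you only verify the second parameter. The first Kummer parameter in Theorem \ref{KV2} is $\tfrac{\alpha_r(1-\alpha_{r+1})}{\alpha_{r+1}}-r+1$, and with $r=-1$ the term $-r+1$ contributes $+2$, so direct substitution yields $\tfrac{a(a-1)}{b}+2$ rather than the $\tfrac{a(a-1)}{b}$ appearing in the corollary (for $r=1$, as in Corollary \ref{r=2}, this term vanishes, which is why the analogous check is invisible there). This discrepancy is present in the paper itself, whose proof is the same one-liner, but since you explicitly claim the parameters come out as stated, you should either carry the $+2$ or flag the mismatch rather than confirm it.
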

\begin{proof}
Note that \eqref{rrr1} implies \eqref{regr} with $r=-1$, $\alpha_0=\tfrac{1}{a}$ and $\alpha_{-1}=\tfrac{a}{b}$.
\end{proof}
Similarly, as Cor. 2.3 of the previous section, this result can be compared with Th. 1.1 in \cite{LHH94} which is its analogue for the Lukacs property.

\subsection{Characterization through independence}
We conclude the paper with the characterization of the Kummer and gamma laws by independence of $U$ and $V$ without any smoothness or moments conditions. We utilize the results obtained above for regressions.
\begin{corollary}
	Let $X$ and $Y$ be independent positive non-degenerate random variables. If $U$ and $V$ defined in \eqref{juvi} are independent then $X$ has a Kummer distribution and $Y$ has a gamma distribution.
\end{corollary}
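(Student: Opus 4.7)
My plan is to follow the change-of-measure strategy of Theorem \ref{tw3}, adapted to the Koudou--Vallois setting. The idea is to tilt the joint law of $(X, Y)$ by $e^{\eta(X+Y)}$ with $\eta < 0$, produce independent variables $\widetilde X, \widetilde Y$ with all positive moments finite, verify that the corresponding $\widetilde U, \widetilde V$ remain independent, and then invoke Theorem \ref{KV2} with $r = 1$, for which no a priori moment assumption on $X$ is required.

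First, I would fix $\eta < 0$ and define $(\widetilde X, \widetilde Y)$ on an auxiliary probability space by
$$d\P_{\widetilde X, \widetilde Y}(x, y) = \frac{e^{\eta(x+y)}}{\E\,e^{\eta(X+Y)}}\, d\P_{X,Y}(x,y).$$
Because $X$ and $Y$ are independent and the tilt factorizes, $\widetilde X$ and $\widetilde Y$ are positive, non-degenerate, and independent; moreover, the factor $e^{\eta x}$ with $\eta < 0$ damps the right tails, giving $\E\,\widetilde X^{k}, \E\,\widetilde Y^{k} < \infty$ for every $k \ge 0$. Define $\widetilde U$ and $\widetilde V$ by applying \eqref{juvi} to $(\widetilde X, \widetilde Y)$.

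The key step is to show $\widetilde U$ and $\widetilde V$ are independent under the tilted measure. The crucial structural fact is that the Radon--Nikodym factor equals $e^{\eta V}/\E\,e^{\eta V}$, which is $\sigma(V)$-measurable. For bounded measurable $f$ and $g$,
$$\E\,f(\widetilde U)\,g(\widetilde V) = \tfrac{\E[f(U)g(V)e^{\eta V}]}{\E\,e^{\eta V}} = \tfrac{\E\,f(U)\,\E[g(V)e^{\eta V}]}{\E\,e^{\eta V}} = \E\,f(\widetilde U)\,\E\,g(\widetilde V),$$
where the middle equality uses independence of $U$ and $V$ under $\P$, and the last uses $\E\,f(\widetilde U) = \E\,f(U)$, itself a consequence of $\sigma(V)$-measurability of the tilt combined with the same independence.

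Now Theorem \ref{KV2} with $r = 1$ applies: no moment hypothesis is needed in that case, and since $\widetilde U \in (0, 1)$ is non-degenerate, the quantities $\alpha_1 = \E\,\widetilde U$ and $\alpha_2 = \E\,\widetilde U^2/\E\,\widetilde U$ satisfy \eqref{regr} trivially by independence. The conclusion produces $a, b, c_1 > 0$ with $\widetilde X \sim \mathcal{K}(a, b-a, c_1)$ and $\widetilde Y \sim \mathcal{G}(b, c_1)$. Inverting the tilt, the density of $X$ is proportional to $e^{-\eta x} f_{\widetilde X}(x)$, which preserves the shape parameters $a$ and $b-a$ and only shifts the rate to $c := c_1 - \eta > 0$; the same applies to $Y$, giving $X \sim \mathcal{K}(a, b-a, c)$ and $Y \sim \mathcal{G}(b, c)$. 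The only genuine subtlety is the independence transfer, which hinges on the specific form $V = X + Y$; the rest is bookkeeping built on top of Theorem \ref{KV2}.
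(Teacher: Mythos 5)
Your proof is correct, and its core --- reducing independence of $U$ and $V$ to the regression conditions of Theorem \ref{KV2} with $r=1$ (equivalently, Corollary \ref{r=2}) --- is exactly the paper's argument. The difference is that you prepend an exponential tilting by $e^{\eta(X+Y)}=e^{\eta V}$, transfer independence to $(\widetilde U,\widetilde V)$, apply the regression characterization to the tilted pair, and then untilt. All of these steps are sound (the independence transfer works precisely because the tilt is $\sigma(V)$-measurable, and untilting only shifts the rate parameter from $c_1$ to $c_1-\eta$), but the entire detour is superfluous here: since $U\in(0,1)$ almost surely, all moments $\E(U^s\mid V)$ for $s\ge 0$ exist with no hypothesis whatsoever, so independence of $U$ and $V$ immediately gives $\E(U\mid V)=\E\,U$ and $\E(U^2\mid V)=\E\,U^2$, and Corollary \ref{r=2} applies directly to $X$ and $Y$ themselves. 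That two-line observation is the paper's whole proof. The change-of-measure device you borrowed is the right tool for Theorem \ref{tw3}, where the unbounded variable $V=X(1+U)$ is the one whose conditional moments must be controlled; in the Koudou--Vallois setting the bounded variable $U$ sits in the conditional expectation, which is exactly why the authors can dispense with tilting (and with any moment assumptions) at this final step.
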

\begin{proof}
	Since $U$ is a $[0,1]$-valued it has all moments finite. Consequently, independence of $U$ and $V$ implies that conditions \eqref{rrr} are satisfied and the result follows by Cor. \ref{r=2}.
\end{proof}

\vspace{5mm}\noindent\small {\bf Acknowledgement.}  We are grateful to G. Letac for sending us his unpublished paper on the Kummer distribution.

This research has been supported by the grant 2016/21/B/ST1/00005 of National Science Center, Poland.

\end{document}